\newtheorem{theorem}{Theorem}[section]
\newtheorem{corollary}[theorem]{Corollary}
\theoremstyle{definition}
\newtheorem{definition}{Definition}[section]
\DeclareSymbolFont{AMSb}{U}{msb}{m}{n}
\DeclareMathSymbol{\N}{\mathbin}{AMSb}{"4E}
\DeclareMathSymbol{\Z}{\mathbin}{AMSb}{"5A}
\DeclareMathSymbol{\R}{\mathbin}{AMSb}{"52}
\DeclareMathSymbol{\Q}{\mathbin}{AMSb}{"51}
\DeclareMathSymbol{\I}{\mathbin}{AMSb}{"49}
\DeclareMathSymbol{\C}{\mathbin}{AMSb}{"43}
\theoremstyle{definition}
\begin{document}
\title{Rank-one transformations, odometers, and finite factors}
\author[M. Foreman, S. Gao, A. Hill, C.E. Silva, B. Weiss]{Matthew Foreman, Su Gao, Aaron Hill, Cesar E. Silva, Benjamin Weiss}
\address{Mathematics Department, UC Irvine, Irvine, CA 92697, USA}
\email{mforeman@math.uci.edu}
\address{Department of Mathematics, University of North Texas, 1155 Union Circle \#311430, Denton, TX 76203, USA}
\email{sgao@unt.edu}
\address{Proof School, 973 Mission Street, San Francisco, CA, 94103, USA}
\email{ahill@proofschool.org}
\address{Department of Mathematics and Statistics, Williams College,, Williamstown, MA 01267, USA}
\email{csilva@williams.edu}
\address{Institute of Mathematics, Hebrew University of Jerusalem, Jerusalem, Israel}
\email{weiss@math.huji.ac.il}

\date{\today}
\subjclass[2010]{Primary 37A05, 37A35}
\keywords{rank-one transformation, odometer, factor, isomorphism, totally ergodic}

\begin{abstract}
In this paper we give explicit characterizations, based on the cutting and spacer parameters, of (a) which rank-one transformations factor onto a given finite cyclic permutation, (b) which rank-one transformations factor onto a given odometer, and (c) which rank-one transformations are isomorphic to a given odometer.  These naturally yield characterizations of (d) which rank-one transformations factor onto some (unspecified) finite cyclic permutation, (d$^\prime$) which rank-one transformations are totally ergodic, (e) which rank-one transformations factor onto some (unspecified) odometer, and (f) which rank-one transformations are isomorphic to some (unspecified) odometer.
\end{abstract}

\maketitle \thispagestyle{empty}


\section{Introduction}

The ultimate motivation of the work done in this paper is the isomorphism problem in ergodic theory as formulated by von Neumann in his seminal paper \cite{vN} of 1932. There he asked for an explicit process to determine when two measure-preserving transformations are measure-theoretically isomorphic. Two important theorems in this direction are von Neumann's theorem classifying discrete spectrum transformations by their eigenvalues, and Ornstein's theorem classifying Bernoulli transformations by their entropy. To our knowledge, no other complete isomorphism invariants that classify a class of transformations  have been found, though of course notions such as mixing, weak mixing, etc., are invariant under isomorphism.  In \cite{FRW}, Foreman, Rudolph, and Weiss showed that the isomorphism relation on the class of all ergodic transformations is complete analytic, in particular not Borel. In some sense, this brings a negative conclusion to the von Neumann program. However, in \cite{FRW} the authors also showed that the isomorphism problem is Borel on the generic class of (finite measure-preserving) rank-one transformations. Thus this provides hope that there should exist some explicit method for determining whether two rank-one transformations are isomorphic. In particular, if one is given a specific rank-one transformation, there should be an explicit description of all rank-one transformations that are isomorphic to it. In this paper we give such explicit descriptions, provided that the given rank-one transformation is an odometer. All the transformations we consider in this paper are invertible finite measure-preserving transformations. 

Another reason for considering odometers is the role they played in a question of Ferenczi. In his survey article \cite{Fe}, Ferenczi asked whether every odometer is isomorphic to a symbolic rank-one transformation.  This question is connected to whether two common definitions of rank-one---the constructive geometric definition and the constructive symbolic definition---are equivalent. As  noted by the referee,  in the Introduction to  Adams--Ferenczi--Petersen \cite{AFP}, the authors mention how one can use Remark 2.10 in Danilenko \cite{D16} to answer this question in the affirmative, and also show how to construct a symbolic rank-one transformation that is isomorphic to any given odometer. The results in this paper can be thought of as a continuation of work in   \cite{AFP}, \cite{D16}. Namely, we explicitly describe {\em all} rank-one transformations that are isomorphic to any given odometer (Theorem~\ref{isomorphictothisodometer}).  In addition, we also explicitly describe all rank-one transformations that are isomorphic to some (unspecified) odometer (Theorem~\ref{isomorphictosomeodometer}). 

Rank-one transformations are determined by two sequences of parameters, known as the cutting parameter and spacer parameter (see Section~\ref{Pre} for the precise definitions). In this paper we give explicit descriptions, in terms of the cutting parameter and spacer parameter, of when a rank-one transformation factors onto a given finite cyclic transformation, or factors onto an (infinite) odometer, or is isomorphic to a given odometer. 

Note that a measure-preserving transformation factors onto a non-trivial finite cyclic transformation if and only if it is not totally ergodic. Thus results in this paper give an explicit description of when an arbitrary rank-one transformation is totally ergodic. This generalizes some result of \cite{GH}, where Gao and Hill gave an explicit description of which rank-one transformations with bounded cutting parameter are totally ergodic. 

The rest of paper is organized as follows. In Section~\ref{Pre} we recall the constructive geometric definition and the constructive symbolic definition of rank-one transformations. We also explicitly define odometers and finite cyclic transformations. In Section~\ref{Fin} we give an explicit description of all rank-one transformations that factor onto a given finite cyclic transformation, as well as a description of rank-one transformations that allow a finite factor. In Section~\ref{Odo} we describe all rank-one transformations that factor onto a given odometer. As a corollary, we get a description of all rank-one transformations that factor onto some odometers. Finally, in Section~\ref{Iso} we describe all rank-one transformations that are isomorphic to a given odometer. Again, this gives rise to a description of all rank-one transformations that are isomorphic to some odometer.

\vskip 12pt
{\em Acknowledgment.} The research in this paper was done at the AIM SQuaRE titled {\it The isomorphism problem for rank-one transformations}. The authors would like to acknowledge the American Institute of Mathematics for the support on this research. M.F. acknowledges the US NSF grant DMS-1700143 for support for this research. S.G. acknowledges the US NSF grants DMS-1201290 and DMS-1800323 for the support of his research. Since August 2019, C.S. has been serving as a Program Director in the Division of Mathematical Sciences at the National Science Foundation (NSF), USA, and as a component of this job, he received support from NSF for research, which included work on this paper. Any opinion, findings, and conclusions or recommendations expressed in this material are those of the authors and do not necessarily reflect the views of the National Science Foundation. We we would like to thank the referee for a careful reading and suggestions that shortened our proofs.

\section{Preliminaries}\label{Pre}

\subsection{Measure-preserving transformations}
We will be  concerned with  Lebesgue spaces, which we shall denote by $(X,\mu)$ or $(Y,\nu)$, and typically not mention the $\sigma$-algebra. We shall assume that the measure of the space is 1 and  in most cases, and unless we explicitly specify to the contrary, we will assume our measures to be nonatomic and call the spaces  standard Lebesgue spaces.   A map $\phi:(X,\mu)\to (Y,\nu)$ is {\it measure-preserving} if for all measurable sets $A$, $\phi^{-1}(A)$ is measurable and $\mu(\phi^{-1}(A))=\nu(A)$. A {\it transformation} $T:(X,\mu)\to (X,\mu)$ is a measure-preserving map that is invertible on a set of full measure and whose inverse is measure-preserving.  We will call $(X,\mu,T)$ a measure-preserving system and, by abuse of notation, also a measure-preserving transformation. 

 If $(X, \mu, T)$ and $(Y, \nu, S)$ are measure-preserving transformations, then a {\em factor} map from $T$ to $S$ is a measure-preserving map $\phi:  (X, \mu) \to (Y, \nu)$ such that for $\mu$-almost every $x \in X$, $\phi \circ T (x) = S \circ \phi (x)$. We say that $T$ {\em factors onto} $S$ if there exists a factor map $\phi$ from $(X,\mu,T)$ onto $(Y,\nu,S)$. If $(X, \mu, T)$ and $(Y, \nu, S)$ are measure-preserving transformations, then an {\em isomorphism} between $T$ and $S$ is a factor map $\phi$ from $(X, \mu,T)$ to $(Y, \nu,S)$ 
 that is invertible a.e..  We note here that neither factor maps nor isomorphisms need to be defined on the entire underlying space $(X, \mu)$, only a subset of $X$ of full measure, and that two measure isomorphisms are considered the same if they agree on a set of full measure.


\subsection{Rank-one transformations} The constructive geometric definition of a rank-one transformation is given below (see e.g., \cite{Fe}). It describes a recursive cutting and stacking process that produces infinitely many Rokhlin towers (or columns)
to approximate the transformation.
\begin{definition} A measure-preserving transformation $T$ on a standard Lebesgue space $(X, \mu)$ is {\it rank-one}
if there exist sequences of positive integers $r_n > 1$, for $n\in\N=\{0, 1, 2, \dots\}$, and nonnegative
integers $s_{n,i}$, for $n\in\N$ and $0 < i \leq r_n$, such that, if $h_n$ is defined by
$$ h_0 = 1; h_{n+1} = r_nh_n +\sum_{0<i\leq r_n}s_{n,i}, $$
then
\begin{equation}\label{r1} \sum^{+\infty}_{n=0} \frac{h_{n+1}-r_nh_n}{h_{n+1}}< +\infty; \end{equation}
and there are subsets of $X$, denoted by $B_n$ for $n\in\N$, by $B_{n,i}$ for $n\in \N$ and $0<i\leq r_n$, and
by $C_{n,i,j}$ for $n\in\N$, $0<i\leq r_n$ and $0<j\leq s_{n,i}$ (if $s_{n,i}= 0$ then there are no
$C_{n,i,j}$), such that for all $n\in\N$:
\begin{itemize}
\item $\{B_{n,i}\,:\,   0 < i \leq r_n\}$ is a partition of $B_n$,
\item the $T^k(B_n)$, $0\leq k < h_n$, are disjoint,
\item $T^{h_n}(B_{n,i}) = C_{n,i,1}$ if $s_{n,i} \neq 0$ and $i \leq r_n$,
\item $T^{h_n}(B_{n,i}) = B_{n,i+1}$ if $s_{n,i} = 0$ and $i < r_n$,
\item $T(C_{n,i,j}) = C_{n,i,j+1}$ if $j < s_{n,i}$,
\item $T(C_{n,i,s_{n,i}}) = B_{n,i+1}$ if $i < r_n$,
\item $B_{n+1} = B_{n,1}$,
\end{itemize}
and the collection $\bigcup_{n=0}^\infty\{B_n, T(B_n), \dots, T^{h_n-1}(B_n)\}$ is dense in the $\sigma$-algebra of all
$\mu$-measurable subsets of $X$.
\end{definition}

Assumption (\ref{r1}) of this definition is equivalent to the finiteness of the measure $\mu$. 
In this definition the sequence $(r_n)$ is called the {\em cutting parameter}, the sets
$C_{n,i,j}$ are called the {\em spacers}, and the doubly-indexed sequence $(s_{n,i})$ is called
the {\em spacer parameter}. For each $n\in\N$, the collection $\{B_n, T(B_n), \dots, T^{h_n-1}(B_n)\}$
gives the {\em stage-$n$ tower}, with $B_n$ as the {\em base} of the tower, and each $T^k(B_n)$,
where $0 \leq k < h_n$, a {\em level} of the tower. The stage-$n$ tower has height $h_n$. At
stage $n+1$, the stage-$n$ tower is cut into $r_n$ many $n$-blocks of equal measure.
Each block has a base $B_{n,i}$ for some $0 < i\leq r_n$ and has height $h_n$. These
$n$-blocks are then stacked up, with spacers inserted in between. At future
stages, these $n$-blocks are further cut into thinner blocks, but they always
have height $h_n$. 

Note that the base of the stage-$m$ tower, $B_m$, is partitioned into $\{ B_{m,i}\,:\,  0<i\leq r_m\}$, where each $B_{m,i}$ is now a level of the stage-$(m+1)$ tower, with $B_{m,1}=B_{m+1}$ being the base of the stage-$(m+1)$ tower. It is clear by induction that for any $n\geq m$, $B_m$ is partitioned into various levels of the stage-$n$ tower. 

We let $I_{m,n}$, for $n\geq  m$,  denote the set of indices for all levels of the stage-$n$ tower that form a partition of $B_m$, i.e.,
$$ I_{m,n}=\{ i\, :\, T^i(B_n)\subseteq B_m, 0\leq i<h_n\}. $$ Note that $B_m=\bigcup_{i\in I_{m,n}}T^i(B_n)$. $I_{m,n}$ is a finite set of natural numbers that can be inductively computed from the cutting and spacer parameters. For example, 
$$ I_{m,m+1}=\{0, h_m+s_{m,1}, 2h_m+s_{m,1}+s_{m,2}, \dots, (r_m-1)h_m+\sum_{0<i<r_m}s_{m,i}\}. $$

We next turn to the constructive symbolic definition of rank-one transformations. This often gives a succinct way to describe 
a concrete rank-one transformation. We will be talking about finite words over the alphabet $\{0,1\}$. Let $F$ be the set of all finite words
over the alphabet $\{0,1\}$ that start with 0. A {\em generating rank-one sequence} is
an infinite sequence $(v_n)$ of finite words in $F$ defined by induction on $n\in\N$:
$$v_0 = 0; v_{n+1} = v_n1^{s_{n,1}}v_n1^{s_{n,2}}\cdots v_n1^{s_{n,r_n}}$$
for some integers $r_n>1$ and non-negative integers $s_{n,i}$ for $0 < i\leq r_n$.
We continue to refer to the sequence $(r_n)$ as the cutting parameter and
the doubly-indexed sequence $(s_{n,i})$ as the spacer parameter. Note that the
cutting and spacer parameters uniquely determine a generating rank-one sequence.
A generating rank-one sequence converges to an infinite rank-one word $V\in \{0,1\}^{\N}$.
We write $V = \lim_{n}v_n$. 

\begin{definition} Given an infinite rank-one word $V$, the {\em symbolic rank-one
system} induced by $V$ is a pair $(X, \sigma)$, where
$$ X = X_V = \{x \in\{0,1\}^\Z\,:\, \mbox{every finite subword of $x$ is a subword of $V$}\}$$
and $\sigma: X \to X$ is the shift map defined by
$$\sigma(x)(k) = x(k + 1)\ \mbox{for all $k\in\Z$}. $$
\end{definition}

Under the same assumption (\ref{r1}) as in the constructive geometric definition, the symbolic rank-one system will carry a unique non-atomic, invariant probability measure. In this case the symbolic rank-one system will be isomorphic to the rank-one transformation that is constructed with the same cutting and spacer parameters. 


The symbolic definition does not explicitly describe  odometers (see Subsection \ref{cyclic} below for definitions), which   are considered rank-one transformations. This was the motivation of Ferenczi's question in \cite{Fe}  as discussed in the introduction.
In contrast, we note that in the topological setting, Gao and Ziegler have recently proved in \cite{GZ} that (infinite) odometers are not topologically isomorphic to symbolic rank-one systems (which are called rank-one subshifts in \cite{GZ}).

When we work with a rank-one transformation we will use
both the terminology and the notation in this subsection.

\subsection{Finite cyclic permutations and odometers\label{cyclic}}

Here we precisely describe what we mean by ``finite cyclic permutation" in the context of measure-preserving transformations. If $k\in\N$ with $k>1$ and $n\in\N$, we denote by $[n]_k$ the unique $m\in\N$ with $m<k$ and $n\equiv m\mod k$. For each $k \in \N$ with $k>1$, let $X_k = \{0, 1, \ldots, k-1\}$, let $\mu_k$ be the measure on $X_k$ where each point has measure $1/k$, and let $f_k: X_k \rightarrow X_k$ given by $f_k (i) = [i+1]_k$.  We let $\Z/k\Z$ denote the transformation $(X_k, \mu_k, f_k)$ and refer to such a transformation as a finite cyclic permutation.  These are the sole cases we consider where the measure is atomic, so the measures are defined on atomic Lebesgue probability spaces, and  we will still refer to $(X_k, \mu_k, f_k)$ as a transformation, though it should be clear from the context, such as when we denote a transformation by $T$, when a transformation is defined on a non-atomic space.  It is natural to speak of a factor map from a measure-preserving transformation $T$ to $(X_k, \mu_k, f_k)$, but since $T$ is implicitly defined on a non-atomic space, it is not possible for such a factor map to be an isomorphism. 

Now we describe what we mean by an odometer (see \cite{Do}).  Loosely it can be described as an inverse limit of a coherent sequence of finite cyclic permutations.  To be more precise, suppose we have a sequence $(k_n : n \in \N)$ of positive integers greater than 1 such that for all $n \in \N$, $k_n | k_{n+1}$.  We now define $X$ as the collection of sequences $\alpha = (\alpha_n : n\in \N) \in \Pi_{n \in \N}  \Z / k_n\Z$ such that for all $m,n \in \N$ with $m \leq n$, $[\alpha_n]_{k_m} = \alpha_m$.  There is a natural measure $\mu$ on $X$ satisfying the following:  for all $n \in \Z$ and all $i \in \{0, 1, \ldots, k_n-1\}$ the set $\{\alpha \in X : \alpha_n = i \}$ has measure $1/k_n$.  There is also a natural bijection $f: X \rightarrow X$ defined by $$f(\alpha) = (f_1(\alpha_1), f_2(\alpha_2), \ldots ) = ([\alpha_1 + 1]_{k_1}, [\alpha_2 + 1]_{k_2}, \dots ).$$
A transformation $(X, \mu, f)$ obtained in this way is called an {\it odometer}. For example, if $k_n=2^n$, one obtains  the standard dyadic odometer.  

The following characterization of when two such odometers are isomorphic is well known.  Suppose $(k_n : n \in \N)$ and $(k_n^\prime : n \in \N)$ are sequences of positive integers greater than 1 such that for all $n \in \N$, $k_n | k_{n+1}$ and $k_n^\prime | k_{n+1}^\prime$.  Then the odometers corresponding to these two sequences are isomorphic if and only if
$$ \{m\in \N\,:\, \exists n \in \N\ (m | k_n)\}=\{m\in\N\,:\, \exists n \in \N\ (m | k_n^\prime)\}.$$  

Because of this characterization we often describe an odometer by an infinite collection $K$ of natural numbers that is closed under taking factors.  If one has such a set $K$, then it is easy to produce a sequence $(k_n: n \in \N)$ of integers $>1$ such that $k_n | k_{n+1}$, for all $n \in \N$, and for which $$K = \bigcup_{n \in \N} \{m \in \N : m | k_n\}.$$  Moreover, any choice of such a sequence $(k_n : n \in \N)$ will give rise to the same odometer, up to isomorphism.  We can now let $\mathcal{O}_K$ denote (any) one of the odometers produced by choosing such a sequence $(k_n: n \in \N)$.  There are canonical ways to choose $\mathcal{O}_K$ based on the maximum power of each prime that occurs in $K$, but we will not go into the details of this canonical choice in this paper.  It is worth noting that the characterization in the preceding paragraph guarantees that if $K \neq K^\prime$ are infinite collections of natural numbers that are closed under factors, then $\mathcal{O}_K \not\cong \mathcal{O}_{K^\prime}.$

Here we collect the important facts about $\mathcal{O}_K$ that we will use in this paper.
\begin{enumerate}
\item  For each $k \in K$, then there is a canonical factor map $\pi_k$ from $\mathcal{O}_K$ to $\Z/k\Z$.
\item  For all $k, k^\prime \in K$, with $k | k^\prime$, then for all $x$ in the underlying set of $\mathcal{O}_K$, $\pi_k (x) = [\pi_{k^\prime} (x)]_k$.
\item  The collection of sets $\{ \pi_k^{-1} (i): k \in K, 0 \leq i < k \}$ generates the $\sigma$-algebra on $\mathcal{O}_K$.
\item  If a measure-preserving transformation factors onto $\Z/k\Z$ for all $k \in K$, then it also factors onto $\mathcal{O}_K$.  If, moreover, the fibers of these maps generate the $\sigma$-algebra on $(X, \mu)$, then that factor map is an isomorphism. The argument for this is similar to the construction of the Kronecker factor of a transformation, see e.g. \cite{Qu}.
\end{enumerate}

\subsection{The notion of $\epsilon$-containment}

In this subsection we define a precise notion of almost containment and briefly describe some of its properties;
this is a standard notion in measure theory also called $(1-\epsilon)$-full.   

\begin{definition}
Let $A$ and $B$ be measurable subsets of positive measure of a measure space $(X, \mu)$ and let $\epsilon >0$.  We say that $A$ is {\em $\epsilon$-contained} in $B$, and write $A \subseteq_{\epsilon} B$, provided that $$\frac{\mu (A \setminus B)}{\mu (A)} < \epsilon.$$ Equivalently, we say that $A$ is {\it $(1-\epsilon)$-full} of $B$ if
$\mu(A\cap B)>(1-\epsilon)\mu(A)$.
\end{definition} 

Here are the basic facts we will need; the reader may refer to e.g. \cite{Si}.
\begin{enumerate}
\item  If $A \subseteq_\epsilon B$ and $A$ is partitioned into sets $A_1, A_2, \ldots, A_r$, there is some $i \leq r$ such that $A_i \subseteq_\epsilon B$.
\item  If $A$ is partitioned into sets $A_1, A_2, \ldots, A_r$ and for all $i \leq r$, $A_i \subseteq_\epsilon B$, then $A \subseteq_\epsilon B$.
\item  Let $(X, \mu, T)$ be a measure-preserving transformation.  If $A \subseteq_\epsilon B$ and $z \in \Z$, the $T^z (A) \subseteq_\epsilon T^z(B)$.
\item  Let $(X, \mu, T)$ be a rank-one transformation.  If $B \subseteq X$ has positive measure, there there is some $n \in \N$ and some $0 \leq i < h_n$ such that $T^i(B_n) \subseteq_{\epsilon} B$.
\end{enumerate}

\section{Factoring onto a finite cyclic permutation}\label{Fin}
It is quite easy to build a rank-one transformation that factors onto a cyclic permutation of $k$ elements.  Simply ensure that for some $N \in \N$, the height of the stage-$N$ tower is a multiple of $k$ and furthermore insist that every time spacers are inserted after stage-$N$ the number of spacers inserted is a multiple of $k$.  If a rank-one transformation is constructed in this way, then one can define, for all $m \geq N$, a function $\pi_m$ which goes from the stage-$m$ tower to $\Z / k\Z$ defined by $\pi_m (x) = [i]_k$, where $x$ belongs to level $i$ of the stage-$m$ tower.  The method of construction guarantees that if $x$ belongs to the stage-$m$ tower and $n \geq m$, then $\pi_m (x) = \pi_n (x)$.  The domains of the functions $\pi_m$ are increasing and their measure goes to one.  Thus, we can define $\pi$ from a full-measure subset of $X$ to $\Z / k\Z$ by $$\pi (x) = \lim_{m \rightarrow \infty} \pi_m (x).$$
This map $\pi$ is clearly a factor map.

The theorem below gives a full characterization of which transformations factor onto a cyclic permutation of $k$ elements.  
\begin{theorem}
\label{finitefactor1}
Let $(X, \mu, T)$ be a rank-one measure-preserving transformation and let $1 < k \in \N$.  The following are equivalent.
\begin{enumerate}
\item[\rm (i)]  $(X, \mu, T)$ factors onto $\Z / k\Z$.
\item[\rm (ii)]  $\forall \eta > 0, \exists N \in \N, \forall n \geq m \geq N, \exists j \in \Z/k\Z$ such that   $$\frac{  |\{i \in I_{m,n} : [i]_k \neq j \}|}{| I_{m,n}|} < \eta.$$
\end{enumerate}  
\end{theorem}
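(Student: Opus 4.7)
The plan is to prove the two implications separately. For (i) $\Rightarrow$ (ii), I would let $\pi: X \to \Z/k\Z$ be the factor map and set $A_j = \pi^{-1}(\{j\})$. Given $\eta > 0$, set $\epsilon = \eta/(1+\eta)$. Using fact (4), find $n_0$ and a level $T^{i_0}(B_{n_0}) \subseteq_\epsilon A_0$; then for each $m \geq n_0$, the partition $T^{i_0}(B_{n_0}) = \bigsqcup_{i \in I_{n_0,m}} T^{i_0+i}(B_m)$ together with fact (1) produces some $i$ with $T^{i_0+i}(B_m) \subseteq_\epsilon A_0$, hence (fact (3)) $B_m \subseteq_\epsilon A_{j_m}$ for some $j_m \in \{0,\ldots,k-1\}$. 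For $n \geq m \geq n_0$, decompose $B_m = \bigsqcup_{i' \in I_{m,n}} T^{i'}(B_n)$; each sub-level satisfies $T^{i'}(B_n) \subseteq_\epsilon A_{[j_n + i']_k}$. Any ``bad'' index $i'$ (with $[j_n + i']_k \neq j_m$) contributes strictly more than $(1-\epsilon)\mu(B_n)$ to $B_m \setminus A_{j_m}$; summing these contributions and using $\mu(B_m \setminus A_{j_m}) < \epsilon \mu(B_m)$ together with $|I_{m,n}| = \mu(B_m)/\mu(B_n)$ gives $|\text{bad}|/|I_{m,n}| < \epsilon/(1-\epsilon) = \eta$. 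Condition (ii) then holds with $j = [j_m - j_n]_k$.

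For the harder direction (ii) $\Rightarrow$ (i), the idea is to construct the factor map as a limit in measure of approximants $\pi_n$ defined on the successive stage-$n$ towers. Pick $\eta_0$ small enough that $(1-\eta_0)^2 > 1/2$ and set $N_0 = N(\eta_0)$. For $N_0 \leq m \leq n$, let $j_{m,n} \in \{0,\ldots,k-1\}$ denote the (now unique) majority residue of $I_{m,n}$ modulo $k$. The natural bijection $I_{m,n} \leftrightarrow I_{m,p} \times I_{p,n}$ given by $(i_1, i_2) \mapsto i_1 + i_2$ (for $N_0 \leq m \leq p \leq n$), combined with pigeonhole on the fraction $(1-\eta_0)^2 > 1/2$, forces the additivity $j_{m,n} = [j_{m,p} + j_{p,n}]_k$. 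Set $c_n = j_{N_0, n}$ and define $\pi_n$ on the stage-$n$ tower by $\pi_n(x) = [i - c_n]_k$ for $x \in T^i(B_n)$. A direct calculation with $x \in T^i(B_m)$ written as $x = T^{i+j}(y')$ for some $y' \in B_n$, $j \in I_{m,n}$, together with the additivity $c_n - c_m = j_{m,n}$, shows $\pi_n(x) = \pi_m(x)$ precisely when $[j]_k = j_{m,n}$; hence $\pi_n = \pi_m$ on a fraction at least $1-\eta_0$ of the stage-$m$ tower.

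Re-running this argument with arbitrary $\eta > 0$ in place of $\eta_0$ shows that $(\pi_n)$ is Cauchy in measure on $X$, since $\mu(\text{stage-}m\text{ tower}) \to 1$. Let $\pi = \lim_n \pi_n$. Since $\pi_n \circ T = [\pi_n + 1]_k$ off the top level $T^{h_n-1}(B_n)$ (whose measure $\mu(B_n) \to 0$), the limit satisfies $\pi \circ T = [\pi+1]_k$ almost everywhere. A level count yields $\mu(\pi_n^{-1}(\{j\})) \to 1/k$ for each $j$, so $\pi$ is measure preserving and thus a factor map onto $\Z/k\Z$.

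The principal obstacle is securing the alignment constants $c_n$ so that the approximants $\pi_n$ actually converge instead of wandering through the $k$ residues as $n$ grows. The crux is the additivity relation $j_{m,n} = [j_{m,p} + j_{p,n}]_k$, which requires $\eta_0$ to be small enough that the unique majority residue of $I_{m,n}$ is preserved under the product structure $I_{m,n} \leftrightarrow I_{m,p} \times I_{p,n}$; this lets $c_n = j_{N_0, n}$ telescope coherently across scales. The forward direction is more routine, essentially a pigeonhole comparing the stage-$n$ sub-level partition of $B_m$ against the partition $\{A_0,\ldots,A_{k-1}\}$ induced by the factor map.
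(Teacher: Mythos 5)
Your proposal is correct and follows essentially the same strategy as the paper's proof: a pigeonhole comparison of the stage-$n$ sublevels of $B_m$ against the fibers $\pi^{-1}(j)$ for (i)$\Rightarrow$(ii), and for (ii)$\Rightarrow$(i) a limit of tower-approximants $\pi_n(x)=[i-c_n]_k$ whose alignment constants are built from majority residues (your additivity relation $j_{m,n}=[j_{m,p}+j_{p,n}]_k$ is exactly the paper's telescoping correction $J_\alpha=\sum_{\beta<\alpha}j_\beta$, stated once and for all rather than along a fixed chain of stages). The only substantive deviation is that you obtain the limit via Cauchy-in-measure convergence rather than the paper's summable-error a.e.\ stabilization; both are valid.
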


\begin{proof}  

First we will show that (i) implies (ii).  Suppose that $\pi : X \rightarrow \Z/k\Z$ is a factor map.  The fibers $\pi^{-1} (0), \pi^{-1} (1), \pi^{-1} (2), \ldots, \pi^{-1} (k-1)$ are a partition of $X$ into sets of measure $1/k$ such that $T(\pi^{-1} (j)) = \pi^{-1} ([j+1]_k)$, for all $j \in \Z/k\Z$.   Let $\eta >0$ and choose $\epsilon$ smaller than both  $\eta/2$ and $1/2$.  

Since the levels of the towers generate the $\sigma$-algebra of $X$, there exists $N\in\N$ such that for all $n>m\geq N$,  every level of the stage-$n$ tower is $\epsilon$-contained in $\pi^{-1}(j)$ for some $j\in \Z/k\Z$.
Fix $j_0 \in \Z/k\Z$ such that $B_m \subseteq_\epsilon \pi^{-1}(j_0)$.  We claim that among the levels of the stage-$n$ tower that comprise the base of the stage-$m$ tower, the fraction of those  that are $\epsilon$-contained in $ \pi^{-1}(j_0)$ must be at least $1-2\epsilon$.  In other words, letting $I^\prime = \{i \in I_{m,n}: T^i(B_n) \not\subseteq_\epsilon \pi^{-1} (j_0)\}$, we claim that 
\begin{equation}\label{fraction}\frac{|I^\prime|}{|I_{m,n}|} < 2\epsilon.
\end{equation}  
Suppose this is not the case. Since 
$$B_m \setminus \pi^{-1}(j_0) \supseteq \bigcup_{i \in I^\prime} \left(  T^i(B_n) \setminus \pi^{-1}(j_0) \right), \text{we have that} $$
 $$\mu \left( B_m \setminus \pi^{-1}(j_0) \right) \geq  |I^\prime| \cdot \mu(B_n) \cdot (1 - \epsilon) = \frac{ |I^\prime|}{|I_{m,n}|} \cdot \mu(B_m) \cdot (1 - \epsilon).$$
Therefore, $$\frac{\mu \left( B_m \setminus \pi^{-1}(j_0) \right) }{ \mu(B_m)} \geq   \frac{ |I^\prime|}{|I_{m,n}|} \cdot (1 - \epsilon) \geq (2 \epsilon) \cdot (1-\epsilon) > \epsilon,$$
since $\epsilon < 1/2$.  This contradicts the fact that $B_m$ is $\epsilon$-contained in $\pi^{-1}(j_0)$ and completes the proof of \eqref{fraction}.

Since the levels of the stage-$n$ tower that are $\epsilon$-contained in $\pi^{-1}(j_0)$ are all in the same congruence class mod $k$, there is some $j \in \Z / k\Z$ such that $$\frac{  |\{i \in I_{m,n} : [i]_k \neq j \}|}{| I_{m,n}|} < 2 \epsilon < \eta,$$
completing the  proof that (i) implies (ii).

Next we will show that (ii) implies (i).  Assuming (ii) we construct a factor map $\pi : X \rightarrow \Z/ k\Z$.  

For all $\alpha \in \N$, let $\eta_\alpha = \frac{1}{2^{\alpha+2}}$ and use (ii) to produce $N_\alpha \in \N$.  We may assume that the sequence $(N_\alpha : \alpha \in \N)$ is increasing and  that for each $\alpha$, $N_\alpha$ is large enough that the measure of the stage-$N_\alpha$ tower is at least $1 - \frac{1}{2^{\alpha +1}}$.  Now, for each $\alpha \in \N$ we also choose $j_\alpha \in \Z/k\Z$ such that 
$$\frac{  |\{i \in I_{N_\alpha,N_{\alpha+1}} : [i]_k \neq j_\alpha \}|}{| I_{N_\alpha,N_{\alpha+1}}|} < \eta_\alpha .$$   
 
For all $\alpha \in \N$, define a function $\phi_{\alpha}$ from the stage-$N_\alpha$ tower to $\Z/k\Z$ as follows:  If $x$ belongs to level $i$ of the stage-$N_\alpha$ tower, then $\phi_\alpha (x) = [i]_k$.  Since for most $x$ in the base of the $N_\alpha$-tower, $\phi_{\alpha+1} (x) =  j_\alpha$, the reader can verify  that for all $\alpha \in \N$, $$\mu \left( \{x \in \textnormal{dom}(\phi_\alpha): \phi_{\alpha+1} (x) \neq  j_\alpha   \} \right) < \eta_\alpha.$$


Now, for each $\alpha \in \N$, we let $J_\alpha = \sum_{\beta < \alpha} j_\beta$.  Also, for each $\alpha \in \N$ we define a function $\pi_\alpha$ from the stage-$N_\alpha$ tower to $\Z/k\Z$ by $\pi_\alpha (x) = [\phi_\alpha (x) - J_\alpha]_k$.  Since  $\phi_\alpha$ and $\pi_\alpha$ have the same domain for all $\alpha \in \N$, and in addition,  if $x \in \textnormal{dom} (\pi_\alpha)$, then $\pi_{\alpha+1} (x)  = \pi_{\alpha} (x)$ if and only if $\phi_{\alpha+1} (x)  = [\phi_{\alpha} (x) + j_\alpha]_k$, and we already know that  
$\mu \left( \{x \in \textnormal{dom}(\phi_\alpha): \phi_{\alpha+1} (x) \neq [\phi_\alpha (x) + j_\alpha]_k   \} \right) < \eta_\alpha,$ then one can verify  that for all $\alpha \in \N$, 
$$\mu \left( \{x \in \textnormal{dom}(\pi_\alpha): \textnormal{ for all $\beta \geq \alpha$, } \pi_\alpha (x)  = \pi_{\beta} (x)  \} \right) \geq 1 - \frac{1}{2^\alpha }.$$


It follows that for $\mu$-almost every $x \in X$, the sequence $(\pi_\alpha (x) : \alpha \in \N)$ eventually stabilizes and we can define $$\pi (x) = \lim_{\alpha \rightarrow \infty} \pi_\alpha (x).$$ 


Choose $\alpha$ sufficiently large so that $\pi_\alpha (x) = \pi(x)$, $\pi_\alpha(T(x)) = \pi(T(x))$ and $x$ belongs to a non-top level of the stage-$N_\alpha$ tower.  If $x$ belongs to level $i$ of the stage $N_\alpha$ tower, then $T(x)$ belongs to level $i+1$ of the stage-$N_\alpha$ tower which implies that $\phi_\alpha (T(x)) = [\phi_\alpha (x) + 1]_k$.  Now, 
$$\pi (T(x)) = \pi_\alpha (T(x)) = [\phi_{\alpha} (T(x)) - J_\alpha]_k  = [\phi_{\alpha} (x) + 1 - J_\alpha]_k =  [\pi (x) +1]_k. $$
Therefore, $\pi: X \rightarrow \Z/k\Z$ is a factor map.
\end{proof}

As a corollary, we obtain a characterization of the rank-one transformations that factor onto some (unspecified) non-trivial finite cyclic permutation, a condition that is well-know to be equivalent to the transformation not being totally ergodic.

\begin{corollary}\label{cortoterg}
Let $(X, \mu, T)$ be a rank-one measure-preserving transformation.  The following are equivalent.
\begin{enumerate}
\item  $T$ factors onto some finite cyclic permutation.
\item  $\exists k \in \N$ with $k>1$, $\forall \eta > 0, \exists N \in \N, \forall n \geq m \geq N, \exists j \in \Z/k\Z$ such that   $$\frac{  |\{i \in I_{m,n} : [i]_k \neq j \}|}{| I_{m,n}|} < \eta.$$
\end{enumerate}  
\end{corollary}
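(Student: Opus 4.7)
The plan is that this corollary follows almost immediately from Theorem~\ref{finitefactor1} by existentially quantifying over $k$. The only real content beyond that is the well-known observation that factoring onto a non-trivial finite cyclic permutation is equivalent to failure of total ergodicity, but this is invoked for the parenthetical remark rather than used in the proof itself.

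For the implication (1) $\Rightarrow$ (2), I would assume $T$ factors onto some finite cyclic permutation. Unpacking the definition, there exists $k \in \N$ with $k > 1$ and a factor map $\pi : X \to \Z/k\Z$. For this particular $k$, Theorem~\ref{finitefactor1} applied to the factor map $\pi$ yields condition (ii) of that theorem verbatim, which is precisely what condition (2) of the corollary asserts (with $k$ existentially quantified).

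For (2) $\Rightarrow$ (1), given that there exists $k \in \N$ with $k > 1$ satisfying the indicated condition, I would fix such a $k$ and observe that the condition is precisely (ii) of Theorem~\ref{finitefactor1}. That theorem then produces a factor map from $(X, \mu, T)$ onto $\Z/k\Z$, which by definition is a finite cyclic permutation, yielding (1).

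Since both directions are direct applications of the theorem, there is no real obstacle: the corollary is essentially a repackaging of Theorem~\ref{finitefactor1} in the form ``there exists $k$ such that \ldots'' on both sides of the equivalence. The only step that requires any care is making sure the quantifier order in (2) matches what the theorem produces; note that $N$ in condition (2) is allowed to depend on both $\eta$ and $k$, which is exactly what Theorem~\ref{finitefactor1} provides once $k$ is fixed.
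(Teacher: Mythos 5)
Your proposal is correct and matches the paper's intent exactly: the paper gives no separate proof of this corollary, treating it as the immediate consequence of Theorem~\ref{finitefactor1} obtained by existentially quantifying over $k$ on both sides, which is precisely what you do. No issues.
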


We end with an equivalent characterization as suggested by the referee. The proof is similar to that of Theorem \ref{finitefactor1}. 

\begin{theorem}
\label{finitefactor2}
Let $(X, \mu, T)$ be a rank-one measure-preserving transformation and let $1 < k \in \N$.  The following are equivalent.
\begin{enumerate}
\item[\rm (i)]  $(X, \mu, T)$ factors onto $\Z / k\Z$.
\item[\rm (ii)]  There is an increasing sequence $(q_n)$ 
 such that   $$\sum_{n=1}^\infty \frac{  |\{i \in I_{q_n,q_n+1} : i \equiv 0 \mod k \}|}{| I_{q_n,q_n}|} < \infty.$$
\end{enumerate}  
\end{theorem}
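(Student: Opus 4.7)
The plan is to mirror the proof of Theorem \ref{finitefactor1}, trading its ``for all $\eta$, there exists $N$'' quantifier structure for a summability statement along a subsequence, in the same spirit as the classical passage from convergence in measure to a.e.\ convergence along a subsequence.

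For (i) $\Rightarrow$ (ii): assuming $T$ factors onto $\Z/k\Z$, I would apply Theorem \ref{finitefactor1}(ii) iteratively with a summable tolerance sequence, for instance $\eta_n = 2^{-n}$. For each $n$, let $N_n$ be the stage returned by that theorem, and inductively choose an increasing sequence $(q_n)$ with $q_n \geq N_n$. At each $q_n$, Theorem \ref{finitefactor1}(ii) supplies a residue $j_n \in \Z/k\Z$ for which the fraction of ``bad'' indices in $I_{q_n,q_n+1}$ (those not congruent to $j_n$ modulo $k$) is at most $\eta_n$, and summing gives a convergent series of the required type.

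For (ii) $\Rightarrow$ (i): I would follow the construction in the proof of Theorem \ref{finitefactor1}(ii) $\Rightarrow$ (i). Define $\phi_n$ on the stage-$q_n$ tower by $\phi_n(x) = [i]_k$ when $x$ lies in level $i$, correct by cumulative shift constants $J_n$ analogous to the earlier proof to form $\pi_n$, and show that the sequence $\pi_n(x)$ stabilizes for $\mu$-almost every $x$. The summability hypothesis is used in a Borel--Cantelli step on the sets of points whose residue class changes between consecutive chosen stages; the a.e.\ limit $\pi(x) = \lim_n \pi_n(x)$ then furnishes the factor map, and the intertwining $\pi \circ T = f_k \circ \pi$ is checked exactly as in the earlier proof by choosing $n$ large enough that $x$ and $T(x)$ both lie in non-top levels where $\pi_n$ has already stabilized.

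The main technical obstacle I anticipate is translating the combinatorial ratio in the hypothesis into a measure-theoretic statement suitable for Borel--Cantelli. Since every level of the stage-$(q_n+1)$ tower has the same measure, that ratio matches, up to normalization, the proportion of $\mu(B_{q_n})$ occupied by ``bad'' sublevels, which in turn controls the measure of the set of points on which $\pi_n$ and $\pi_{n+1}$ disagree. Once this correspondence is in place, summability makes the bad sets form a sequence with summable measures, and Borel--Cantelli eliminates the $\mu$-null set on which stabilization of $\pi_n$ fails, closing the construction of the factor map.
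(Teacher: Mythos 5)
Your overall strategy is the right one---and, as far as one can tell, the one the paper intends, since the paper offers no proof of Theorem~\ref{finitefactor2} beyond the remark that it is similar to Theorem~\ref{finitefactor1}. Your Borel--Cantelli reading of the summability hypothesis, and the identification of the combinatorial ratio with the measure of $\{x\in\mathrm{dom}(\phi_n):\phi_{n+1}(x)\neq\phi_n(x)\}$ via the equal measure of the levels, are exactly what makes (ii)$\Rightarrow$(i) work. (For the record, condition (ii) as printed has to be read with $i\not\equiv 0 \bmod k$ and with $|I_{q_n,q_{n+1}}|$ in the denominator: as literally written the denominator is $|I_{q_n,q_n}|=1$, and counting the indices that \emph{are} divisible by $k$ makes the sum diverge even for the model constructions that visibly factor onto $\Z/k\Z$.)

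The genuine gap is in (i)$\Rightarrow$(ii). Condition (ii) pins the distinguished residue class to $0$, but your argument only produces, for each $n$, \emph{some} class $j_n\in\Z/k\Z$ such that most of $I_{q_n,q_{n+1}}$ is congruent to $j_n$, and these $j_n$ need not vanish or even agree from one $n$ to the next. Concretely, if $\pi$ is the factor map and $c_n$ denotes the fiber with $B_n\subseteq_\epsilon\pi^{-1}(c_n)$, then the good class for $I_{m,n}$ is $[c_m-c_n]_k$; since $B_n$ can be an extremely thin piece of $B_m$ (of relative measure $1/|I_{m,n}|$, possibly smaller than $\epsilon$), the inclusion $B_n\subseteq B_m$ does not force $c_n=c_m$, so the good class can be nonzero. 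The repair is a pigeonhole step you have not included: since $\Z/k\Z$ is finite, some $j^*$ satisfies $c_n=j^*$ for infinitely many $n$, and one chooses the increasing sequence $(q_n)$ from among those $n$ (still with $q_n\geq N_n$ as you require). Then every $i\in I_{q_n,q_{n+1}}$ with $T^i(B_{q_{n+1}})\subseteq_\epsilon\pi^{-1}(j^*)$ satisfies $[i]_k=[c_{q_n}-c_{q_{n+1}}]_k=0$, and the $2\eta_n$ bound from the proof of Theorem~\ref{finitefactor1} yields the summable series with the class $0$, as the statement demands. (Alternatively one can keep your varying $j_n$ and pigeonhole on the partial sums $J_n=\sum_{\beta<n}j_\beta$, using that $I_{q_m,q_n}$ is the sumset $I_{q_m,q_{m+1}}+\cdots+I_{q_{n-1},q_n}$; but some such step is indispensable, since what you have written establishes a condition with a drifting residue class rather than the stated one. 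Your (ii)$\Rightarrow$(i) direction is otherwise fine, and in fact simplifies once the class is $0$: the correction constants $J_n$ all vanish and $\pi_n=\phi_n$.)
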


\section{Factoring onto an odometer}\label{Odo}

We now give characterizations of which rank-one transformations factor onto a given odometer, and which rank-one transformations factor onto some (unspecified) odometer.  These characterizations are essentially corollaries of Theorem \ref{finitefactor1}.

\begin{theorem}\label{T:factortoodometer}
Let $(X, \mu, T)$ be a rank-one measure-preserving transformation and let $\mathcal{O}_K$ be an odometer.  The following are equivalent.
\begin{enumerate}
\item[\rm (i)]  $(X, \mu, T)$ factors onto $\mathcal{O}_K$.
\item[\rm (ii)]  $\forall k \in K, \forall \eta > 0, \exists N \in \N, \forall n \geq m \geq N, \exists j \in \Z/k\Z$ such that   $$\frac{  |\{i \in I_{m,n} : [i]_k \neq j \}|}{| I_{m,n}|} < \eta.$$
\end{enumerate}  
\end{theorem}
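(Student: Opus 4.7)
The plan is to reduce this theorem directly to Theorem~\ref{finitefactor1}, using the structural facts about odometers collected at the end of Subsection~\ref{cyclic}; essentially no new combinatorial work is needed beyond what was done for finite cyclic factors.

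For the direction (i) $\Rightarrow$ (ii), I would suppose $\phi: X \to \mathcal{O}_K$ is a factor map and fix an arbitrary $k \in K$. By fact (1) in Subsection~\ref{cyclic}, there is a canonical factor map $\pi_k: \mathcal{O}_K \to \Z/k\Z$, so the composition $\pi_k \circ \phi$ is a factor map from $(X,\mu,T)$ onto $\Z/k\Z$. Applying Theorem~\ref{finitefactor1} to this composition yields exactly the combinatorial condition in (ii) at the chosen $k$. Since $k \in K$ was arbitrary, (ii) holds in full.

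For the direction (ii) $\Rightarrow$ (i), I would run the argument in the opposite order. For each fixed $k \in K$, the statement in (ii) restricted to that $k$ is precisely clause (ii) of Theorem~\ref{finitefactor1}, so that theorem produces a factor map $\phi_k: X \to \Z/k\Z$. Thus $(X,\mu,T)$ factors onto $\Z/k\Z$ for every $k \in K$. Fact (4) in Subsection~\ref{cyclic} now applies directly: having a factor map onto each $\Z/k\Z$, $k \in K$, is enough to produce a factor map onto $\mathcal{O}_K$.

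The main thing to watch is the reliance on fact (4), whose proof is only indicated by analogy with the Kronecker factor construction. Conceptually, the individual maps $\phi_k$ need to be chosen coherently, since the factor maps produced by Theorem~\ref{finitefactor1} are only determined up to a rotation by an element of $\Z/k\Z$. One way to see that no obstruction arises is to pick a cofinal sequence $k_1 \mid k_2 \mid \cdots$ in $K$, choose $\phi_{k_n}$ inductively, and replace $\phi_{k_n}$ by $[\phi_{k_n} - c_n]_{k_n}$ for a suitable constant $c_n \in \Z/k_n\Z$ so that $[\phi_{k_{n+1}}]_{k_n} = \phi_{k_n}$ almost everywhere; then the map $x \mapsto (\phi_{k_n}(x))_{n \in \N}$ lands almost surely in $\mathcal{O}_K$ and intertwines $T$ with the odometer rotation. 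Since the paper treats fact (4) as known background, however, the argument can simply cite it and stop.
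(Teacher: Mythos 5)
Your proposal is correct and follows essentially the same route as the paper: compose with the canonical factor maps $\pi_k$ and apply Theorem~\ref{finitefactor1} for (i)$\Rightarrow$(ii), and for (ii)$\Rightarrow$(i) apply Theorem~\ref{finitefactor1} for each $k \in K$ and then invoke the inverse-limit fact (fact (4) of Subsection~\ref{cyclic}) to assemble a factor map onto $\mathcal{O}_K$. Your added remark about choosing the maps $\phi_{k_n}$ coherently along a cofinal divisibility chain is a reasonable expansion of that background fact, which the paper simply cites.
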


\begin{proof}
Suppose $(X, \mu, T)$ factors onto $\mathcal{O}_K$.  Then for  each $k \in K$, one can compose this factor map with a factor map from $\mathcal{O}_K$ to $\Z/ k\Z$ to get a factor map from $(X, \mu, T)$ to  $\Z/ k\Z$.  Together with Theorem \ref{finitefactor1}, this implies condition (ii).

Now suppose that condition (ii) holds.  By Theorem \ref{finitefactor1} we know that $(X, \mu, T)$ factors onto  $\Z/ k\Z$ for every $k \in K$.  Therefore, $(X, \mu, T)$ factors onto $\mathcal{O}_K$.
\end{proof}

By a  proof is similar to that of Theorem~\ref{T:factortoodometer} we  obtain the following corollary.

\begin{corollary}
Let $(X, \mu, T)$ be a rank-one measure-preserving transformation. The following are equivalent.
\begin{enumerate}
\item[\rm (i)]  $(X, \mu, T)$ factors onto some odometer $\mathcal{O}$.
\item[\rm (ii)]  $\forall M \in \N, \exists k \geq M, \forall \eta > 0, \exists N \in \N, \forall n \geq m \geq N, \exists j \in \Z/k\Z$ such that   $$\frac{  |\{i \in I_{m,n} : [i]_k \neq j \}|}{| I_{m,n}|} < \eta.$$
\end{enumerate}  
\end{corollary}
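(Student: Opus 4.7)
The plan is to reduce this corollary to Theorem~\ref{T:factortoodometer} together with the characterization of odometers as arising from subsets $K\subseteq\N$ that are infinite, closed under divisors, and directed under divisibility (i.e., any two elements have a common multiple lying in $K$).

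For the forward direction (i) $\Rightarrow$ (ii), assume $T$ factors onto some $\mathcal{O}_K$. Then $K$ is infinite, and for each $k\in K$ composing the factor map with the canonical projection $\pi_k:\mathcal{O}_K\to\Z/k\Z$ (fact (1) in Subsection~\ref{cyclic}) gives a factor of $T$ onto $\Z/k\Z$. Theorem~\ref{finitefactor1} then supplies the density condition for each such $k$. Given $M\in\N$, pick any $k\in K$ with $k\geq M$ and the statement in (ii) holds.

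For the reverse direction (ii) $\Rightarrow$ (i), I would proceed as follows. Set $S=\{k>1: T \text{ factors onto }\Z/k\Z\}$. Condition (ii) together with Theorem~\ref{finitefactor1} shows that $S$ is unbounded. The next step is to enlarge $S$ to a set $K$ that is admissible as an index set for an odometer, namely to the closure of $S\cup\{1\}$ under divisors and least common multiples, and to verify that $T$ still factors onto $\Z/k\Z$ for every $k\in K$. Closure under divisors is immediate: if $T$ factors onto $\Z/k\Z$ and $m\mid k$, compose with the quotient $\Z/k\Z\to\Z/m\Z$. Closure under least common multiples is the substantive point: given factor maps $\pi_k,\pi_{k'}$, form $\psi=(\pi_k,\pi_{k'}):X\to\Z/k\Z\times\Z/k'\Z$, whose image carries the diagonal map $(i,j)\mapsto(i+1,j+1)$. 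Every orbit of this diagonal map has cardinality $\mathrm{lcm}(k,k')$. Since rank-one transformations are ergodic, $\psi_*\mu$ is an ergodic invariant measure and is therefore uniformly supported on a single such orbit; that orbit is $\mathbf{Z}$-equivariantly isomorphic to $\Z/\mathrm{lcm}(k,k')\Z$, yielding the desired factor. Once $K$ is shown to be admissible, $K$ is infinite (it contains $S$), and $T$ factors onto $\Z/k\Z$ for every $k\in K$; Theorem~\ref{T:factortoodometer} (equivalently, fact (4) in Subsection~\ref{cyclic}) then produces a factor map from $T$ onto $\mathcal{O}_K$.

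The main obstacle is the closure under least common multiples, which is the only step that is not purely formal manipulation of Theorem~\ref{finitefactor1} and Theorem~\ref{T:factortoodometer}. The cleanest way to handle it is the ergodic joining argument above; alternatively one could extract directly from (ii) a divisibility chain $k_1\mid k_2\mid\cdots$ with each $k_n\in S$, but building such a chain still amounts to combining factors onto $\Z/k\Z$ and $\Z/k'\Z$ into a factor onto a common multiple, and thus requires the same ergodicity input.
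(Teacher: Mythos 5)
Your proof is correct, and the overall reduction -- compose with the canonical projections $\pi_k$ and invoke Theorem \ref{finitefactor1} for (i)$\Rightarrow$(ii), then assemble the finite cyclic factors into an odometer via fact (4) of Subsection \ref{cyclic} for (ii)$\Rightarrow$(i) -- is exactly what the paper intends when it says the proof is ``similar to that of Theorem \ref{T:factortoodometer}.'' The one place you go beyond what the paper writes down is also the one place where something genuinely needs to be said: condition (ii) only yields an unbounded set $S$ of moduli $k$ for which $T$ factors onto $\Z/k\Z$, and an arbitrary infinite, divisor-closed set need not index an odometer (e.g.\ the set of primes); one needs $S$ to be directed under divisibility so that a chain $k_1\mid k_2\mid\cdots$ exhausting its divisor closure exists. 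Your joining argument -- push $\mu$ forward under $(\pi_k,\pi_{k'})$, note that the factor of an ergodic system is ergodic, and that every orbit of the diagonal rotation on $\Z/k\Z\times\Z/k'\Z$ has size $\mathrm{lcm}(k,k')$ -- correctly establishes closure under least common multiples (rank-one transformations are indeed ergodic, so the hypothesis is available). An equivalent standard route is via the eigenvalue group: $T$ factors onto $\Z/k\Z$ iff $e^{2\pi i/k}$ is an eigenvalue, and the eigenvalues form a group, which gives the lcm closure for free. Either way, your write-up supplies a step the paper leaves implicit rather than diverging from its method.
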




\section{Being isomorphic to a given odometer}\label{Iso}

It turns out that it is not too hard to construct a rank-one transformation that is isomorphic to a given odometer.  Let $K$ be an infinite set of natural numbers that is closed under factors.  First choose a sequence $(k_n: n \in \N)$ of natural numbers such that the factors of the partial products $\prod_{m<n}k_m$ are precisely the set $K$ and for which $$\sum_{n \in \N} \frac{1}{k_n}< \infty. $$
Then build a rank-one transformation by a symbolic construction as follows.  For $n \in \N$, let $v_0 = 0$ and let $v_{n+1} = (v_n)^{k_n-1} 1^{v_n}$.  Then the resulting transformation $T$ is what is called {\em essentially $0$-expansive} by Adams, Ferenczi, and Petersen in \cite{AFP}, and their method shows that $T$ is isomorphic to the odometer $\mathcal{O}_K$. A definition of an isomorphism is also implicit in our results below.


In this section we characterize in general when a rank-one transformation is isomorphic to a given odometer. The idea is to build on our characterization for rank-one transformations which factor onto a given odometer, and then to examine when a factor map turns out to be an isomorphism. The following result gives the explicit details.

\begin{theorem}
\label{isomorphictothisodometer}
Let $(X, \mu, T)$ be a rank-one measure-preserving transformation and let $\mathcal{O}_K$ be an odometer.  The following are equivalent.
\begin{enumerate}
\item[\rm (I)]  $T$ is isomorphic to $\mathcal{O}_K$.
\item[\rm (II)]  Both of the following hold.
\begin{enumerate}
\item[\rm (IIa)]  $\forall k \in K, \forall \eta > 0, \exists N \in \N, \forall n \geq m \geq N, \exists j \in \Z/k\Z$ such that   $$\frac{  |\{i \in I_{m,n} : [i]_k \neq j \}|}{| I_{m,n}|} < \eta.$$
\item[\rm (IIb)]  $\forall l \in \N, \forall \epsilon>0, \exists k \in K, \exists N \in \N, \forall m \geq N,  \exists D \subseteq \Z / k\Z$ such that $$\frac{|\{i \leq |h_m|: [i]_k \in D\} \Delta I_{l,m} |}{|I_{l,m}|} < \epsilon$$ 
\end{enumerate} 
\end{enumerate} 
\end{theorem}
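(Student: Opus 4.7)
The plan is to leverage Theorem~\ref{T:factortoodometer}: condition (IIa) is exactly what is needed to produce a factor map $\pi: X \to \mathcal{O}_K$, so the theorem reduces to understanding when such a factor map is an isomorphism. By fact (4) of Subsection~\ref{cyclic}, $\pi$ is an isomorphism precisely when the $\sigma$-algebra generated by the fibers $\{(\pi_k \circ \pi)^{-1}(j) : k \in K,\, 0 \le j < k\}$ agrees with the full $\sigma$-algebra on $X$. Since the levels $T^i(B_l)$ of the rank-one towers generate the $\sigma$-algebra and the conjugating action of $T$ is compatible with $\pi$, this further reduces to showing that each base $B_l$ can be approximated in measure by such fibers. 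The role of (IIb) is to encode this approximation combinatorially at the level of the index sets $I_{l,m}$.

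For (I) $\Rightarrow$ (II): (IIa) is immediate from Theorem~\ref{T:factortoodometer}. For (IIb), I would start with $B_l$ and use that it lies in the $\sigma$-algebra generated by the fibers of the $\pi_k \circ \pi$'s, so for any $\epsilon>0$ there exist $k \in K$ and $D \subseteq \Z/k\Z$ with $\mu(B_l \,\Delta\, \pi^{-1}\pi_k^{-1}(D)) < \tfrac{\epsilon}{3}\,\mu(B_l)$. Inspecting the construction of $\pi$ in the proof of Theorem~\ref{finitefactor1} shows that for all sufficiently large $m$ each stage-$m$ level $T^i(B_m)$ is $\tfrac{\epsilon}{3}$-contained in a unique fiber of $\pi_k\circ\pi$, and the fiber's index is determined by $[i+c]_k$ for a constant $c$ (the offset $J_\alpha$ from that construction). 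Absorbing $c$ into $D$ and converting measures of equal-measure levels into cardinalities of index sets then yields (IIb).

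For (II) $\Rightarrow$ (I): Theorem~\ref{T:factortoodometer} supplies a factor map $\pi$ from (IIa), and the task is to verify the generation condition using (IIb). Given $l \in \N$ and $\epsilon>0$, (IIb) produces $k \in K$, $N \in \N$, and $D \subseteq \Z/k\Z$ so that for every $m \ge N$ the union $E_m = \bigcup\{T^i(B_m): i<h_m,\ [i]_k \in D\}$ has normalized symmetric difference with $B_l$ at most $\epsilon$. Taking $m$ large enough that simultaneously most stage-$m$ levels are $\epsilon$-contained in their appropriate fibers of $\pi_k\circ\pi$, I can identify $E_m$, up to $O(\epsilon)\mu(B_l)$ error, with $\pi^{-1}\pi_k^{-1}(D')$ for a translate $D'$ of $D$. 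Since $l$ and $\epsilon$ were arbitrary, fact (4) of Subsection~\ref{cyclic} upgrades $\pi$ to an isomorphism.

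The main obstacle I anticipate is the bookkeeping around this constant shift. In the proof of Theorem~\ref{finitefactor1} the factor map on stage-$m$ levels is given by $[i]_k$ shifted by a tower-dependent constant $J_\alpha$, so the correspondence between fibers of $\pi_k\circ\pi$ and congruence classes of $i$ is only up to translation. One must simultaneously choose $m$ large enough to control (a) the index-set estimate from (IIb), (b) the near-containment of stage-$m$ levels in fibers of $\pi_k\circ\pi$, and (c) the resulting approximation of $B_l$ by fibers, without the three error budgets compounding. The freedom to choose $D$ as an arbitrary subset of $\Z/k\Z$ is what lets the shift be absorbed, but making these estimates uniform is the technical heart of the argument.
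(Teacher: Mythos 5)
Your proposal follows essentially the same route as the paper's proof: reduce isomorphism to the generation condition of fact (4), use (IIa) together with Theorem~\ref{finitefactor1} to build the factor maps $\pi_k$, and use (IIb) to show each base $B_l$ is approximated in measure by fibers, with the correspondence between stage-$m$ levels and congruence classes mod $k$ holding only up to a constant shift that is absorbed into the freedom of choosing $D$. The one small inaccuracy is in the (I)$\Rightarrow$(II) direction, where the factor map onto $\Z/k\Z$ comes from composing the given isomorphism with the canonical projection rather than from the construction in Theorem~\ref{finitefactor1}, so the constant shift is obtained from the equivariance argument in the (i)$\Rightarrow$(ii) half of that proof (as the paper indicates by ``similar methods'') rather than from the offset $J_\alpha$; this does not affect the validity of the argument.
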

\begin{proof}  First assume (II).  Using condition (IIa) and the proof of Theorem \ref{finitefactor1} we  construct, for each $k\in K$, a factor map $\pi_k: X \rightarrow \Z/k\Z$.  Recall that $\pi_k$ is built using a series of approximating maps $(\pi_{k, \alpha}: \alpha \in \N)$.

It suffices to show that for every $l \in \N$ and every $\delta > 0$, there is some $k \in K$ and some $E \subseteq \Z/ k\Z$ such that $$\mu (B_l \Delta \pi_k^{-1} [ E] ) < \delta.$$

Let $l \in \N$ and $\delta >0$.  Let $\epsilon = \delta/2$.  First, we use condition (IIb) above to produce $k \in K$ and $N >l$ such that for all $m \geq N$, there exists some $D \subseteq \Z/k\Z$ such that
$$\frac{|\{i \leq |h_m|: [i]_k \in D\} \Delta I_{l,m} |}{|I_{l,m}|} < \epsilon.$$

Since $k \in K$, we have a factor map $\pi_k : X \rightarrow \Z/k\Z$ that is built using the approximating maps $\pi_{k, \alpha}$.  Choose a specific $\alpha \in \N$ so that $\frac{1}{2^{\alpha}} < \delta/2$ and such that $N_\alpha$ is greater than the $N$ produced in the preceding paragraph.  Using the fact that $N_\alpha>N$ and using features of the approximating maps $\pi_{k, \alpha}$ we get the following.

\begin{enumerate}
\item [(i)]  There exists some $D \subseteq \Z/k\Z$ such that
$$\frac{|\{i \leq h_{N_\alpha}: [i]_k \in D\} \Delta I_{l,N_\alpha} |}{|I_{l,N_\alpha}|} < \epsilon.$$

\item [(ii)]  There exists $E \subseteq \Z/k\Z$ such that $$\bigcup_{d \in D}  ( \bigcup_{\substack{0 \leq i < h_{N_\alpha}\\ [i]_k =d}} T^i (B_{N_\alpha})) = \bigcup_{e \in E}  \pi_{k, \alpha}^{-1} (e).  $$

\item [(iii)] $\mu (\{x \in \textnormal{dom}(\pi_{k, \alpha}): \pi_{k, \alpha} (x) = \pi_k(x)\}) \geq 1 - \frac{1}{2^{\alpha}}$.
\end{enumerate}

Using these properties one can show  that $$\mu (B_l \Delta \pi_k^{-1} [ E] ) < \delta,$$
completing the proof that $(X, \mu, T)$ is isomorphic to $\mathcal{O}_K$.

Now we assume that $(X, \mu, T)$ is isomorphic to $\mathcal{O}_K$ and let $\phi$ be an isomorphism between $T$ and $\mathcal{O}_K$.  For each $k \in K$ we can compose $\phi$ with the canonical factor map of $\mathcal{O}_K$ onto $\Z/k\Z$ to get a factor map $\pi_{k}$ from $X$ to $\Z/k\Z$.  For such a $k\in K$, Theorem \ref{finitefactor1} guarantees that $\forall \eta > 0, \exists N \in \N, \forall n \geq m \geq N, \exists j \in \Z/k\Z$ such that   $$\frac{  |\{i \in I_{m,n} : [i]_k \neq j \}|}{| I_{m,n}|} < \eta.$$  Thus we have  condition (IIa).  

Next, exchanging the variable $\epsilon$ for $\delta$ in condition (IIb), we will prove that $\forall l \in \N, \forall \delta>0, \exists k \in K, \exists N \in \N, \forall m \geq N,  \exists D \subseteq \Z / k\Z$ such that $$\frac{|\{i \leq |h_m|: [i]_k \in D\} \Delta I_{l,m} |}{|I_{l,m}|} < \delta.$$

Let $l \in \N$ and $\delta>0$. Let $\epsilon = \delta \cdot \mu(B_l)/4$.  The reader can verify  that there exists some $k \in K$ and $E \subseteq \Z/k\Z$ such that 
\begin{equation}
\mu (B_l \Delta \pi_k^{-1} (E)) < \epsilon. \tag{*}
\end{equation} 


We next claim that there exists $N \in \N$ such that for all $m \geq N$ there exists some $j \in \Z/k\Z$ such that for all $0 \leq i < h_m$, $T^{i}(B_m) \subseteq_\epsilon \pi_k^{-1} ([i +j]_k)$.  We can prove this with similar methods. 


Fix such an $N \in \N$ that also satisfies $\mu\left(\bigcup_{0\leq i<h_N}T^i(B_N)\right)  >1-\epsilon$ and let $m \geq N$.  We now claim that there exists $D \subseteq \Z/k\Z$ such that 
\begin{equation} \mu (  \bigcup_{\substack{0 \leq i < h_m\\ [i]_k \in D}} T^i(B_m)   \Delta \ \pi_k^{-1} (E) ) < 3 \epsilon. \tag{**} \end{equation}

Combining equations (*) and (**) we now have that 
$$\mu (  \bigcup_{\substack{0 \leq i < h_m\\ [i]_k \in D} }T^i(B_n)  \Delta \  B_l) < 4 \epsilon.$$
To finish the proof of the theorem, 
note that
$$\begin{array}{l} \displaystyle\frac{|\{i < h_m: [i]_k \in D\} \Delta I_{l,m} |}{|I_{l,m}|} 
=\frac{\displaystyle \mu (  \bigcup_{\substack{0 \leq i < h_m\\ [i]_k \in D}} T^i(B_m)  \Delta   \bigcup_{i \in I_{l,m} } T^i(B_m) ) }{\displaystyle \mu  (   \bigcup_{i \in I_{l,m} } T^i(B_m) )} \\
=\frac{\displaystyle \mu (   \bigcup_{\substack{0 \leq i < h_m\\ [i]_k \in D}} T^i(B_m)  \Delta   B_l ) }{\displaystyle \mu  \left( B_l   \right)} <\displaystyle\frac{4\epsilon}{\mu(B_l)} = \delta.\end{array}$$
\end{proof}

Next we characterize when a rank-one transformation is isomorphic to some (unspecified) odometer.

\begin{theorem}
\label{isomorphictosomeodometer}
Let $(X, \mu, T)$ be a rank-one measure-preserving transformation.  The following are equivalent.
\begin{enumerate}
\item[\rm (I)]  $T$ is isomorphic to an odometer.
\item[\rm (II)]  For all $l \in \N$ and all $\epsilon>0$, there is some $k \in \N$ such that for all $\eta >0$ there exists an $N \in \N$ such that for all $n > m \geq N$, 
\begin{enumerate}
\item[\rm (IIa)]  There is some $j \in \Z / k\Z$ such that $$\frac{|\{i \in I_{m,n}: [i]_k \neq j\}|}{|I_{m,n}|} < \eta$$
\item[\rm (IIb)]  There is some $D \subseteq \Z / k\Z$ such that $$\frac{|\{i \leq |h_m|: [i]_k \in D\} \Delta I_{l,m} |}{|I_{l,m}|} < \epsilon$$
\end{enumerate}
\end{enumerate}  
\end{theorem}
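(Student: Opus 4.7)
The plan is to reduce both implications to Theorem~\ref{isomorphictothisodometer}, the extra work in (II) $\Rightarrow$ (I) being to construct an appropriate $K$ and show it is infinite. For (I) $\Rightarrow$ (II): suppose $T \cong \mathcal{O}_K$ for some $K$, so that Theorem~\ref{isomorphictothisodometer} supplies its own conditions (IIa) and (IIb). Given $l$ and $\epsilon$, use (IIb) of that theorem to obtain $k \in K$ and $N_1$ witnessing the symmetric-difference estimate for every $m \geq N_1$. Because $k \in K$, condition (IIa) of that theorem supplies, for each $\eta > 0$, an $N_2$ with the (IIa) estimate for $n \geq m \geq N_2$. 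Setting $N = \max(N_1, N_2)$ gives the required witness of the present (II); the quantifier structure matches because $k$ is chosen from $l$ and $\epsilon$ alone, before $\eta$ is introduced.

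For (II) $\Rightarrow$ (I), define $K = \{k \in \N : T \text{ factors onto } \Z/k\Z\}$. By Theorem~\ref{finitefactor1}, $K$ is exactly the set of $k$ such that clause (IIa) of (II), quantified over all $\eta > 0$, holds. $K$ is closed under divisors: composing a factor map to $\Z/k\Z$ with the canonical surjection $\Z/k\Z \to \Z/k'\Z$ (for $k' \mid k$) gives a factor map to $\Z/k'\Z$. It remains to prove (a) that $K$ is infinite and (b) that the two hypotheses of Theorem~\ref{isomorphictothisodometer} hold for this $K$; that theorem will then yield $T \cong \mathcal{O}_K$.

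For (a), suppose toward contradiction that $K \subseteq \{1, \ldots, k^*\}$. Since $\mu(B_{l+1}) = \mu(B_l)/r_l \leq \mu(B_l)/2$, we have $\mu(B_l) \to 0$; fix $l$ with $\mu(B_l) < 1/(4 k^*)$ and set $\epsilon = 1/2$. Applying (II) produces some $k$; by (IIa) combined with Theorem~\ref{finitefactor1}, $k \in K$, so $k \leq k^*$. Pick any $\eta > 0$ to extract $N$, and choose $m \geq N$ large enough that $h_m \mu(B_m) > 3/4$ and $k/h_m$ is negligible. Writing $A_m(D) = \{i \leq h_m : [i]_k \in D\}$, elementary counting gives $|A_m(D)| = (|D|/k) h_m + O(k)$, while
\[\frac{|I_{l,m}|}{h_m} = \frac{\mu(B_l)}{h_m \mu(B_m)} < \frac{1}{3 k^*}.\]
If $|D| = 0$, the ratio in (IIb) equals $1 > \epsilon$. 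If $|D| \geq 1$, then $|A_m(D)|/h_m \geq 1/k^* - o(1) > 2|I_{l,m}|/h_m$, so $|A_m(D) \Delta I_{l,m}| \geq |A_m(D)| - |I_{l,m}| > |I_{l,m}|$ and the ratio again exceeds $1 > \epsilon$. Both cases contradict (IIb), so $K$ is infinite.

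For (b), condition (IIa) of Theorem~\ref{isomorphictothisodometer} holds automatically from the definition of $K$ together with Theorem~\ref{finitefactor1}. For its condition (IIb), given $l$ and $\epsilon$, the $k$ supplied by (II) of the present theorem lies in $K$ by step (a); fixing any $\eta > 0$ yields an $N$ so that for every $m \geq N$ (and any auxiliary $n > m$), (IIb) of (II) provides the required $D$, and this estimate is independent of $n$. The main obstacle is step (a): unlike the otherwise routine quantifier bookkeeping, it requires a genuine measure-theoretic argument relying on the asymptotic $h_m \mu(B_m) \to 1$ and on the incompatibility between the density $\mu(B_l)$ of $I_{l,m}$ in $\{0,\ldots,h_m\}$ and the rigid density $|D|/k$ of a congruence-class set when $k$ is bounded by $k^*$.
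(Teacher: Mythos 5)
Your proof is correct and follows essentially the same route as the paper: both directions are reductions to Theorem~\ref{isomorphictothisodometer}, with the reverse direction building a factor-closed set $K$ from the witnesses of (II) and verifying (IIa) via Theorem~\ref{finitefactor1}. The only differences are cosmetic --- you take $K$ to be the set of all finite cyclic factors of $T$ rather than the divisors of the witnesses $k_{l,\epsilon}$ (these coincide in the end), and you write out in full the density/counting argument for the infinitude of $K$, which the paper leaves to the reader via the hint $k_{l,\epsilon}\geq h_l$; your quantitative version of that step is sound.
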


\begin{proof}
  Suppose $T$ is isomorphic to an odometer.  Let $K$ be the finite factors of that odometer.  Let $l \in \N$ and $\epsilon >0$.  Using condition (IIb) of Theorem \ref{isomorphictothisodometer} we can find some $k \in K$ and some $N_1 \in \N$, such that $\forall m \geq N_1,  \exists D \subseteq \Z / k\Z$ such that $$\frac{|\{i \leq |h_m|: [i]_k \in D\} \Delta I_{l,m} |}{|I_{l,m}|} < \epsilon$$ 
For any $\eta >0$ we can use that specific $k\in K$ and condition (IIa) of Theorem \ref{isomorphictothisodometer} to find $N_2 \in \N$ such that $\forall n \geq m \geq N_2, \exists j \in \Z/k\Z$ such that   $$\frac{  |\{i \in I_{m,n} : [i]_k \neq j \}|}{| I_{m,n}|} < \eta.$$
Letting $N = \max\{N_1, N_2\}$ we complete condition (II) of the theorem.

Suppose now that condition (II) holds.  For all $l \in \N$ and all $\epsilon >0$, produce $k_{l, \epsilon}$, and $N_{l, \epsilon}$ according to condition (II).  
Let $$K = \{k \in \N : k | k_{l, \epsilon} \textnormal{ for some $l \in \N$ and $\epsilon>0$}\}.$$ 

It is clear that $K$ is closed under factors.  We leave it to the reader to show that $K$ is infinite by showing that if $l \in \N$ and $\epsilon<1$, then $k_{l, \epsilon} \geq h_l$.  
  
Now, consider $\mathcal{O}_K$.  We will prove that $T$ is isomorphic to $\mathcal{O}_K$ by showing that conditions (IIa) and (IIb) of Theorem \ref{isomorphictothisodometer} hold.  First, let $k \in K$.  Choose $l \in N$ and $\epsilon>0$ such that $k | k_{l, \epsilon}$.  We chose $k_{l, \epsilon}$ using condition (II) of this theorem.  Theorem \ref{finitefactor1} guarantees that that $T$ factors onto $\Z / k_{l, \epsilon}\Z$.  Therefore, $T$ must also factor onto $\Z/k\Z$.  Now Theorem \ref{finitefactor1} guarantees that condition (IIa) of Theorem \ref{isomorphictothisodometer} holds.  Condition (IIb) of Theorem \ref{isomorphictothisodometer} follows immediately from our assumption that condition (II) of this theorem holds and our choice of $K$.
\end{proof}

Before closing we consider an example of a rank-one transformation that factors onto an odometer but is not isomorphic to any odometer. 

\smallskip

\noindent {\bf Example.} Let $T$ be the rank-one transformation corresponding to the symbolic definition $v_0=0$ and 
$$ v_{n+1}=v_nv_n1^{2^{n+1}}v_nv_n. $$
Then the length of $v_n$, or equivalently the height $h_n$ of the stage-$n$ tower, is $2^n(2^{n+1}-1)$. Using Theorem~\ref{finitefactor1} it is easy to verify that $T$ has all powers of $2$ as finite factors. Thus $T$ factors onto the dyadic odometer. As n1oted by the referee, ergodicity of the dyadic powers and non-ergodicity of the odd powers follows from \cite[Theorem H]{D19}. An argument using Theorem~\ref{finitefactor1} also shows that $T$ does not have any other factors. Indeed, suppose $T$ has an odd finite factor $a$. If no multiples of $a$ are of the form $2^k-1$ for any $k$, then the condition in Theorem~\ref{finitefactor1} fails, since the elements of $I_{m,n}$ come in pairs, with a difference $h_m=2^m(2^{m+1}-1)$ between them. On the other hand, suppose $a$ has a multiple of the form $2^{m+1}-1$ for some $m$. Then note that the elements of $I_{m,n}$ come in quadruples, with the sequence of differences $h_m, 2^{m+1}, h_m$ in between them. This implies also that at least half of the indices of $I_{m,n}$ disagree on the congruence class mod $a$, and thus the condition in Theorem~\ref{finitefactor1} fails. Therefore the maximal odometer factor of $T$ is the dyadic odometer. Finally, a similar argument shows that condition (IIb) of Theorem~\ref{isomorphictothisodometer} fails. Consequently $T$ is not isomorphic to the dyadic odometer. In conclusion, $T$ is not isomorphic to any odometer.

\thebibliography{999}

\bibitem{AFP}
T. Adams, S. Ferenczi, K. Petersen, 
\textit{Constructive symbolic presentations of rank one measure-preserving systems,} {Colloq. Math.} 150 (2017), no. 2, 243--255.

\bibitem{D16} A.I. Danilenko, 
\textit{Actions of finite rank: weak rational ergodicity and partial rigidity},  
Ergodic Theory Dynam. Systems 36 (2016), no. 7, 2138�2171. 

\bibitem{D19} A.I. Danilenko, 
\textit{Rank-one actions, their (C,F)-models and constructions with bounded parameters},  
J. Anal. Math. 139 (2019), no. 2, 697�749.

\bibitem{Do}
T. Downarowicz, 
\textit{Survey of odometers and Toeplitz flows.} {Algebraic and topological dynamics}, 7--37, \textit{Contemp. Math.}, 385, Amer. Math. Soc., Providence, RI, 2005.

\bibitem{Fe}
S. Ferenczi, \textit{Systems of finite rank}, {Colloq. Math.} 73:1 (1997), 35--65.

\bibitem{FRW}
M. Foreman, D. J. Rudolph, B. Weiss, 
\textit{The conjugacy problem in ergodic theory}, {Ann. of Math.} 173 (2011), 1529--1586. 

\bibitem{GH}
S. Gao, A. Hill,
\textit{Bounded rank-one transformations}, {J. Anal. Math.} 129 (2016), 341--365.

\bibitem{GZ}
S. Gao, C. Ziegler,
\textit{Topological factors of rank-one subshifts},  {Proc. Amer. Math. Soc. Ser. B} 7 (2020), 118�126. 

\bibitem{Qu} M. Queffelec, \textit{Substitution dynamical systems and spectral analysis}, LNM 1294, Springer, NY, 2010.

\bibitem{Si} C.E. Silva, \textit{Invitation to ergodic theory}, SML 42. American Mathematical Society, Providence, RI, 2008.

\bibitem{vN}
J. von Neumann, \textit{Zur Operatorenmethode in der klassischen Mechanik}, {Ann. of
 Math.} 33 (1932), 587--642.

\end{document}